\newtheorem{theorem}{Theorem}[section]
\newtheorem{lemma}[theorem]{Lemma}
\newtheorem{proposition}[theorem]{Proposition}
\newtheorem{conjecture}[theorem]{Conjecture}
\theoremstyle{definition}
\newtheorem{definition}[theorem]{Definition}
\newtheorem{example}[theorem]{Example}
\theoremstyle{remark}
\newtheorem{remark}[theorem]{Remark}
\newcommand{\Z}{\mathbb Z}
\renewcommand{\O}{\mathcal{O}}
\newcommand{\ca}{\mathcal}
\renewcommand{\L}{\mathbb{L}}
\newcommand{\T}{\mathbb{T}}
\newcommand{\lotimes}{\stackrel{\L}{\otimes}}
\newcommand{\bb}{\mathbb}
\newcommand{\hfiber}{\stackrel{h}{\times}}
\begin{document}

\title[Derived Marsden-Weinstein Quotient]{The Derived Marsden-Weinstein Quotient is symplectic}
\author{Jeremy Pecharich}
\address{Mt. Holyoke College, South Hadley, MA}
\email{jpechari@mtholyoke.edu}
\label{I1}\label{I2}\label{I3}

\begin{abstract}
Let $(X,\omega_X)$ be a derived scheme with a $0$-symplectic form and suppose there is a Hamiltonian $G$-action with a moment map for $G$ a reductive group. We prove, under no further assumptions, that symplectic reduction along any coadjoint orbit in the category of derived Artin stacks has a $0$-symplectic form.

\end{abstract}
\maketitle

\section{Introduction}
Symplectic reduction has a vast history going back to the work of Lagrange, Poisson, and Jacobi where it was used  to reduce the dimension of a phase space with an abelian symmetry group. In the 1970's S. Smale generalized Jacobi's method to the case of $SO(3)$. Finally, J. Marsden and A. Weinstein, using the moment map, formulated what is known as symplectic reduction.   Symplectic reduction is now used in an array of contexts such as moduli theory, representation theory, and mathematical physics to name a few.  

We now recall the formulation of symplectic reduction. Let $G$ be a reductive group acting on a symplectic variety $(X,\omega)$ by symplectomorphisms. In addition we assume that the action is Hamiltonian with moment map $\textbf J:X\to \frak g^*$. Since the action is Hamiltonian the level set of a regular value $\mu$ of $\textbf{J}$ is a smooth $G$-invariant subvariety of $X$. The technique of symplectic reduction due to J. Marsden and A. Weinstein then constructs a symplectic form on the quotient space 
$$
\textbf{J}^{-1}(\mu)/G,
$$
if the action is free and proper. 

One can weaken the assumption so that $G$ acts only locally free at the expense of the quotient being a symplectic orbifold. There are many ways to weaken these assumptions further so that the quotient is no longer an orbifold but more generally an Artin stack. When $\mu$ is not a regular value the level set is not smooth and the dimension is different on the various orbits of the action of $G$. However, a deep result of R. Sjamaar and E. Lerman in the $C^{\infty}$ setting is that the quotient space is a union of symplectic manifolds or to be more precise a \emph{stratified symplectic space} \cite{SjaLer}. The jump in the dimension of the quotient is a precursor that the quotient should be taken in the sense of derived geometry \cite{HAGII}. 
The main theorem is the following derived version of the Marsden-Weinstein quotient \cite{MarWei}.
\begin{theorem}[Derived Marsden-Weinstein Quotient] 
\label{derivedMW}
Let $(X,\omega_X,G,\bf J)$ be a Hamiltonian G-space for $G$ a reductive group. Let $j:X\stackrel{h}{\times}_{\frak g^*}\bb O\hookrightarrow X\times \bb O$ be the inclusion map, where $\bb O\subset \frak g^*$ is any coadjoint orbit. Then 
\begin{itemize}
\item the orbit space $\ca X_{red}:=(X\stackrel{h}{\times}_{\frak g^*}\mathbb{O})/G$ is a smooth derived Artin stack,
\item there is a degree $0$ symplectic form $\omega_{red}$ on $\ca X_{red}$ such that $j^*\omega_X=\pi^*\omega_{red}$, where $\pi:X\stackrel{h}{\times}_{\frak g^*}\bb O\to \ca X_{red}$ is the quotient map.
\end{itemize}
\end{theorem}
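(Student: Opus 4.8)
The plan is to realize $\ca X_{red}$ as a Lagrangian intersection inside a $1$-shifted symplectic stack and then invoke the Lagrangian intersection theorem of Pantev--Toën--Vaquié--Vezzosi, which produces an $(n-1)$-shifted symplectic structure on the homotopy fiber product of two Lagrangians in an $n$-shifted symplectic target. Taking $n=1$ will give exactly the degree $0$ form we want.

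First I would pass to quotient stacks. Since both $\mathbf J\colon X\to\mathfrak g^*$ and the inclusion $\mathbb O\hookrightarrow\mathfrak g^*$ are $G$-equivariant, and since $[-/G]$ carries the $G$-equivariant homotopy pullback $X\hfiber_{\mathfrak g^*}\mathbb O$ to the homotopy pullback of the quotients over $[\mathfrak g^*/G]$, I obtain
\[
\ca X_{red}=(X\hfiber_{\mathfrak g^*}\mathbb O)/G\;\simeq\;[X/G]\times^h_{[\mathfrak g^*/G]}[\mathbb O/G].
\]
The target $[\mathfrak g^*/G]$ is the organizing object. I would identify it with the shifted cotangent stack $T^*[1]BG$, whose underlying stack is the total space of $\mathbb L_{BG}[1]\simeq\mathfrak g^\vee$ with the coadjoint action, and then invoke that $T^*[n]$ of any derived Artin stack is canonically $n$-shifted symplectic. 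Hence $[\mathfrak g^*/G]$ carries a canonical $1$-shifted symplectic form; I emphasize that this uses only that $G$ is algebraic, not a choice of invariant pairing on $\mathfrak g$.

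Next I would exhibit both legs as Lagrangian. For $[X/G]\to[\mathfrak g^*/G]$ this is precisely the shifted reformulation of the Hamiltonian hypothesis: a Hamiltonian $G$-action on the $0$-shifted symplectic $(X,\omega_X)$ with moment map $\mathbf J$ is the same datum as a Lagrangian structure on $[X/G]\to[\mathfrak g^*/G]$, so the required isotropic closing homotopy is supplied by $\mathbf J$ itself. For $[\mathbb O/G]\to[\mathfrak g^*/G]$ I would observe that a coadjoint orbit is itself a $0$-shifted symplectic variety (the Kirillov--Kostant--Souriau form) whose $G$-action is Hamiltonian with moment map the inclusion $\mathbb O\hookrightarrow\mathfrak g^*$; applying the same correspondence to the Hamiltonian space $\mathbb O$ makes $[\mathbb O/G]\to[\mathfrak g^*/G]$ Lagrangian. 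With both Lagrangian structures in hand, the intersection theorem produces a $0$-shifted symplectic form $\omega_{red}$ on the fiber product, i.e. on $\ca X_{red}$. For the geometric claim, $[X/G]$, $[\mathbb O/G]$ and $[\mathfrak g^*/G]$ are all smooth derived Artin stacks (quotients of smooth derived schemes by the reductive group $G$), so their homotopy fiber product $\ca X_{red}$ is a derived Artin stack whose cotangent complex, built from perfect complexes, is perfect; smoothness in the derived sense then also follows from the self-duality $\mathbb T_{\ca X_{red}}\simeq\mathbb L_{\ca X_{red}}$ furnished by the nondegeneracy of $\omega_{red}$.

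Finally, the compatibility $j^*\omega_X=\pi^*\omega_{red}$ is not an extra condition but should be read off from the construction: the intersection theorem builds $\omega_{red}$ out of the two isotropic structures together with the homotopy witnessing that their images agree in $[\mathfrak g^*/G]$, and tracing this construction back along $j$ and $\pi$ should recover the pullback of $\omega_X$. I expect the main obstacle to be exactly this last step — matching the abstract form produced by the intersection theorem with the concrete $j^*\omega_X$, and checking that the quotient-versus-fiber-product identity and the two Lagrangian structures are compatible at the level of homotopies (not merely underlying maps), so that the isotropic data genuinely glue. The identification of the Hamiltonian datum with a Lagrangian structure over $[\mathfrak g^*/G]$ is the conceptual heart, and once it is in place everything downstream is a formal consequence of the PTVV intersection theorem.
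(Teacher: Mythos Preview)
Your proposal is correct and is in fact the now-standard conceptual proof, essentially due to Calaque and Safronov, but it is genuinely different from what the paper does. The paper works by hand: it first treats the case $\mathbb O=\{\mu\}$, writes down the three-term tangent complex
\[
\mathbb T_{\ca X_{red}}=[\,\mathfrak g\otimes\O_{\ca X_\mu}\to T_X|_{\ca X_\mu}\to \mathfrak g^*\otimes\O_{\ca X_\mu}\,],
\]
and exhibits the quasi-isomorphism $\mathbb T_{\ca X_{red}}\simeq\mathbb L_{\ca X_{red}}$ directly using $\omega_X$ in the middle term; closedness is then argued separately via two lemmas chasing the lift $\widetilde{\omega_X}\in HC^-_2(X)$ through functoriality of the map $HC^-\to HH$ along $j$ and then through $G$-invariants along $\pi$. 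The general coadjoint orbit is obtained at the end by the classical shifting trick on $(X\times\mathbb O,\ \omega_X-\omega_{\mathbb O})$ with moment map $\mathbf J'(x,\eta)=\mathbf J(x)-\eta$.

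What your route buys is that nondegeneracy and closedness come packaged together from the PTVV Lagrangian intersection theorem, and the coadjoint-orbit case is handled uniformly with no separate shifting argument; it also places the result in a framework that immediately generalizes (shifted levels, quasi-Hamiltonian actions, etc.). What the paper's route buys is that it is essentially self-contained and closely mirrors the original Marsden--Weinstein argument, with no appeal to the equivalence ``Hamiltonian $G$-space $\Leftrightarrow$ Lagrangian over $T^*[1]BG$'' which you are taking as input. Your own caveat is accurate: in the Lagrangian-intersection formulation the form produced on the fiber product is built from \emph{both} isotropic structures, so what one naturally obtains is $\pi^*\omega_{red}=j^*(\omega_X-\omega_{\mathbb O})$ rather than literally $j^*\omega_X$; reconciling this with the compatibility as stated requires exactly the unwinding you flag, whereas in the paper's explicit construction $j^*\omega_X=\pi^*\omega_{red}$ holds on the nose by design.
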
 

Part of the motivation for this generality comes from the BRST formalism, which is an approach to classical field theory in the presence of gauge symmetries. It is the author's hope that one can use the symplectic form constructed in Theorem \ref{derivedMW} to put BRST quantization on a rigorous footing \cite{Costello}. We also hope this will give insight into the philosophy that `quantization commutes with reduction' in the singular case. More precisely, we expect the following conjecture to hold.
\begin{conjecture}[Categorical quantization commutes with reduction] Let $\frak D^G_{qcoh}(X)$ be a dg-enhancement of the derived category of $G$-equivariant quasi-coherent sheaves on $X$ and $\frak D_{qcoh}(\ca X_{red})$ be a dg-enhancement of the derived category quasi-coherent sheaves on $\ca X_{red}$, then there exists an equivalence of dg-categories
$$
\frak D^G_{qcoh}(X)_{\omega_X}\simeq \frak D_{qcoh}(\ca X_{red})_{\omega_{red}},
$$
where the subscript denotes the category deformed in the direction of $\omega_X$, $\omega_{red}$.

\end{conjecture}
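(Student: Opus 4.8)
The plan is to interpret both sides as deformation quantizations and then to construct the equivalence from a quantized version of the reduction correspondence. For a $0$-symplectic derived stack $(\mathcal Y,\omega)$ the shifted symplectic structure equips $\O_{\mathcal Y}$ with the data needed for deformation quantization, and following the program of Calaque, Pantev, Toën, Vaquié, and Vezzosi one expects a flat $\C[[\hbar]]$-linear deformation $\O_{\mathcal Y}^{\hbar}$ of the structure sheaf. I would define $\frak D_{qcoh}(\mathcal Y)_{\omega}$ to be the dg-category of $\O_{\mathcal Y}^{\hbar}$-modules, so that reducing modulo $\hbar$ recovers $\frak D_{qcoh}(\mathcal Y)$. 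On the left-hand side I would use the identification $\frak D^G_{qcoh}(X)\simeq \frak D_{qcoh}([X/G])$ and quantize $\O_{[X/G]}$ compatibly with the Hamiltonian structure, which should produce a quantum moment map $\hat{\mathbf J}$ deforming $\mathbf J$.

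The conceptual backbone is the expectation that quantization is functorial for Lagrangian correspondences: such a correspondence between $0$-symplectic stacks should be quantized to a bimodule, hence to an integral transform between the associated module categories. Symplectic reduction along $\bb O$, which produces $\ca X_{red}$ from $X$ by Theorem \ref{derivedMW}, is exactly such a correspondence, realized concretely by the inclusion $j:X\hfiber_{\frak g^*}\bb O\hookrightarrow X\times\bb O$ followed by the quotient map $\pi$. I would model its quantization by the BRST double complex: the Koszul differential enforcing the constraint $\hat{\mathbf J}=\mu$ together with the Chevalley--Eilenberg differential computing derived $G$-invariants, assembled into a \emph{quantum BRST complex} $\mathrm{BRST}^{\hbar}$ whose cohomology is the candidate reduced algebra $\O_{\ca X_{red}}^{\hbar}$. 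Convolution against $\mathrm{BRST}^{\hbar}$ then defines the functor $\frak D^G_{qcoh}(X)_{\omega_X}\to \frak D_{qcoh}(\ca X_{red})_{\omega_{red}}$.

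To prove this functor is an equivalence I would argue by deformation. First I would check that modulo $\hbar$ it specializes to the classical integral transform along the reduction correspondence, i.e. derived pull-push along $j$ and $\pi$; the smoothness of $\ca X_{red}$ furnished by Theorem \ref{derivedMW}, together with base change along the derived fiber product $X\hfiber_{\frak g^*}\bb O$, should make this classical functor an equivalence of the undeformed categories. Given a morphism of flat $\C[[\hbar]]$-linear deformations that is an equivalence modulo $\hbar$, a Nakayama-type completeness argument upgrades it to an equivalence of the deformed dg-categories, which would complete the proof.

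The hard part will be the compatibility of quantization with the two operations that assemble the reduction, namely the derived fiber product along the moment map and the passage to $G$-invariants. Concretely one must establish that $\hat{\mathbf J}$ exists as a genuine quantum moment map and that the cohomology of $\mathrm{BRST}^{\hbar}$ is flat over $\C[[\hbar]]$ and agrees with $\O_{\ca X_{red}}^{\hbar}$, i.e. that the spectral sequence of the quantum BRST double complex degenerates as in the classical case with no higher $\hbar$-correction. I expect reductivity of $G$ to control the Chevalley--Eilenberg direction and the derived framework to guarantee that the relevant intersection is clean, so that no obstruction class appears; proving this degeneration --- equivalently, ruling out a quantum anomaly obstructing the equivalence --- is where the genuine difficulty lies, and is presumably why the statement is posed as a conjecture rather than a theorem.
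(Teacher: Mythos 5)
The first thing to say is that this statement is posed in the paper as a \emph{conjecture}, and the paper contains no proof of it: the author explicitly sets it aside (``quantization in derived geometry is still awaiting a rigorous definition so we will leave these points for future work''). So your text cannot be compared against a paper argument; it must stand on its own as a proof, and it does not. Its two load-bearing inputs are themselves unestablished in the required generality: (a) the existence of a flat $\C[[\hbar]]$-linear deformation $\O^{\hbar}_{\ca Y}$ of the structure sheaf of a $0$-symplectic derived Artin stack --- the Calaque--Pantev--To\"en--Vaqui\'e--Vezzosi machinery produces quantizations for shifts $n>0$ (and $n<0$ by Tate constructions), while the $0$-shifted case for Artin stacks is precisely the one that reduces to open formality-type problems --- and (b) functoriality of quantization under Lagrangian correspondences, which is a program rather than a theorem. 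Assuming both is assuming essentially the content of the conjecture, which is presumably why it is stated as one; to that extent your outline is a reasonable research plan but not a proof.

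Beyond the deferred foundations, one concrete step would fail as stated: the claim that your functor specializes modulo $\hbar$ to an equivalence of the \emph{undeformed} categories, to be upgraded by a Nakayama argument. The undeformed categories $\frak D^G_{qcoh}(X)$ and $\frak D_{qcoh}(\ca X_{red})$ are not equivalent in general, even in the smooth, free, regular-value case with no derived corrections. Take $X=T^*G$ with $G$ acting by left translations and $\bb O=\{0\}$: the action is free, $[X/G]\simeq \frak g^*$, so the left-hand side is $\frak D_{qcoh}(\frak g^*)$, while $\ca X_{red}$ is a point and the right-hand side is the derived category of vector spaces; pull-push along $j$ and $\pi$ is a well-defined functor here but manifestly not an equivalence, and no smoothness or base-change input changes that. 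This is not an accident: in the known instances of quantization commuting with reduction (quantum Hamiltonian reduction for $\mathcal D$-modules, $W$-algebras, Cherednik algebras) the equivalence is \emph{created} by the deformation, typically only at generic parameters, and fails at $\hbar=0$. Consequently the Nakayama bootstrap is structurally the wrong mechanism --- if it applied, the conjecture would be an immediate corollary of a classical equivalence that is false --- and the genuine mathematics is exactly the part you flag and postpone: existence of the quantum moment map, flatness over $\C[[\hbar]]$, and degeneration of the quantum BRST complex (absence of anomaly), likely together with a sharper formulation of the conjecture itself, since the same $T^*G$ test case puts pressure on its naive reading even after deformation.
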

\noindent
But quantization in derived geometry is still awaiting a rigorous definition so we will leave these points for future work.

An outline of the paper is as follows. In section 2 we define the needed material from derived stacks after \cite{HAGII}. However, we strongly suggest the survey articles of B. To\"en and G. Vezzosi \cite{Toen2005, HAG2DAG}. We also discuss differential forms on derived stacks due to  D. Ben-Zvi and D. Nadler \cite{BenNad} and derived symplectic forms due to T. Pantev, B. To\"en, M. Vaqui\'e, and G. Vezzosi \cite{PTVV}. The last section proves the main theorem.   Quite surprisingly the proof is very similar to the classical proof of Marsden and Weinstein, cf. Remark \ref{Compare}. This result was also obtained by T. Nevins in the case when the level set is a complete intersection but techniques from derived geometry were not needed in this case \cite{Nevins}.

\emph{Acknowledgements:} I would like to thank Tony Pantev for proposing the problem and many useful discussions. I would also like to thank Ivan Mirkovi\'c for answering many basic questions about derived geometry.

\section{Derived Stacks}

\subsection{Basic notions} In derived algebraic geometry one studies homotopical generalizations of commutative rings. For our purposes we will work over the field of complex numbers, but most of what is said can be generalized to a more general ring. The homotopical replacement that we use is that of non-positively graded commutative differential graded algebras with differential of degree $1$. We denote the category of such algebras with the usual model structure where equivalences are quasi-isomorphisms and fibrations are epimorphisms in degree $\leq -1$ by $cdga_k$.

To be slightly colloquial, the book of algebraic geometry is written in the language of commutative algebras, while the book of derived algebraic geometry is written in the language of commutative dg-algebras. More precisely, we recall that a scheme $X$ is determined by its functor of points i.e.,
$$
X:Alg_k\to Sets,
$$
where $Alg_k$ is the category of commutative algebras and $Sets$ is the category of sets. Using this point of view a \emph{derived scheme} $X$ is a functor
\begin{equation}
\label{dScheme}
X:cdga_k\to Sets.
\end{equation}
We denote the category of schemes and derived schemes by $Sch_k$ and $DSch_k$, respectively.

The category $Alg_k$ is a full subcategory of $cdga_k$ where we view a commutative algebra as a commutative dg algebra concentrated in degree $0$. This gives a pair of adjoint functors
\begin{equation}
\label{adjoint}
\iota:Sch_k\rightleftarrows DSch_k:h^0,
\end{equation}
where $\iota$ places a commutative algebra in degree $0$ and $h^0$ restricts the domain of the functor \eqref{dScheme} to $Alg_k$. It should be noted that for a scheme $X$ the operations on $\iota(X)$ will generally not be equivalent to those in the usual scheme setting. The following example shows this point, and which also serves as our primary example.

\begin{example}[Derived fiber product]
\label{fiberproduct}
Let $A,B,C$ be commutative algebras such that there exists morphisms $B\leftarrow A\rightarrow C$. The \emph{derived fiber product} is the homotopy limit of diagrams $B\leftarrow A\rightarrow C$, which we denote by $B\stackrel{h}{\times}_AC$. Considered as a derived scheme the derived fiber product is the spectrum of the derived tensor algebra
$$
B\lotimes_AC\in cdga_k.
$$
In general, this will not be quasi-isomorphic to the tensor product $B\otimes_AC\in Alg_k$. However, $h^0(B\lotimes_AC)=B\otimes_AC$. If $V,W,X$ are the associated schemes then we denote the derived fiber product by $V\stackrel{h}{\times}_XW$. These derived schemes form a Cartesian square
$$
\xymatrix{V\hfiber_XW\ar[r]^-{p_W}\ar[d]_{p_V}&W\ar[d]\\
V\ar[r]&X.}
$$ 
It follows that 
$$
h^{-i}\left(\O_{V\stackrel{h}{\times}_XW}\right)=Tor_i^X(V,W).
$$
When $V,W\subset X$ are subvarieties that intersect transversally the derived fiber product is quasi-isomorphic to the fiber product in the category of schemes. In some sense the derived fiber product of varieties encodes the non-transversal intersection.
\end{example}

Let $sSet$ be the category of simplical sets then a stack is a functor
$$
X:Alg_k\to sSet.
$$
Following the philosophy for derived schemes from above a derived stack is a functor 
$$
X:cdga_k\to sSet.
$$
However, the intricate details of derived stacks will not be used in the sequel, and we instead refer to the surveys of B. To\"{e}n and G. Vezzosi \cite{Toen2005, HAG2DAG}.

For $X$ a derived Artin stack there is an associated cotangent complex $\bb L_X$. The complex $\bb L_X$ is concentrated in degree $(-\infty,1]$ and is actually perfect, if we assume that $X$ is locally of finite presentation (which will usually be the case) \cite{HAGII}. In particular, the cotangent complex can be dualized to the linear stack $\bb T_X$ concentrated in degree $[1,\infty)$ i.e., $(\bb L_X)^\vee=\bb T_X$. In the case when $X$ is a scheme or an algebraic stack the cotangent complex $\bb L_X$ reduces to that defined by Illusie \cite{Ill}. We illustrate the cotangent and tangent complexes in our primary example.

\begin{example}
\label{cotangentfiber}
Let $V,W,X$ be smooth varieties as in Example 2.1, then the cotangent complex of the derived fiber product is the complex concentrated in degrees $[-1,0]$
$$
\L_{V\hfiber_XW}=[q^*\Omega_X\to p_V^*\Omega_V\oplus p_W^*\Omega_W],
$$
where $q:V\hfiber_XW\to V\to X$. The tangent complex is then 
$$
\T_{V\hfiber_XW}=[p_V^*T_V\oplus p_W^*T_W\to q^*T_X],
$$
where the morphism is given by $p_V^*(df)+p_W^*(dg)$.
\end{example}

\subsection{Differential forms on derived stacks} Our discussion here will be to collect definitions and fix the notation that we use in the proof of the main theorem. The necessary technicalities have been worked out in the excellent paper of D. Ben-Zvi and D. Nadler \cite{BenNad}. 

The basic idea in defining differential forms on a derived stack comes from the homological Hochschild-Kostant-Rosenberg isomorphism for schemes
$$
\O_\Delta\stackrel{\bb L}{\otimes}_{\O_{X\times X}}\O_{\Delta}=\Omega^{-\bullet}_X.
$$
Using algebraic topology the left hand side is the free loop space $\ca LX:=Map (S^1,X)$ of $X$, but with the derived tensor product since the intersection is not transverse.  More precisely, the classifying stack of the affine line $B\bb G_a$ with the additive group action is an affinization of the simplicial space $S^1=B\Z$. Using algebraic geometry the right hand side is functions on the shifted tangent bundle
$$
T_X[-1]=Spec\,Sym^\bullet_{\O_X}(\Omega_X^1[1]).
$$
By a theorem of D. Ben-Zvi and D. Nadler these two interpretations are the same in derived geometry.

\begin{proposition}\cite[Proposition 1.1]{BenNad}
\label{MapTangent}
Fix a derived scheme $X$ and let $\bb L_X$ be the cotangent complex then the derived mapping stack $Map(B\bb G_a,X)$ is equivalent to the odd tangent bundle $\bb T_X[-1]=Spec\, Sym_{\O_X}^\bullet(\bb L_X[1])$. Moreover, there is a canonical equivalence $\bb T_X[-1]\simeq Map (S^1,X)$ given by the affinization $S^1\to B \bb G_a$.
\end{proposition}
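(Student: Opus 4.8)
The plan is to reduce to the affine case and then run an explicit functor-of-points computation, the crux of which is the universal property of the cotangent complex. First I would observe that both assignments $X\mapsto Map(B\mathbb{G}_a,X)$ and $X\mapsto \T_X[-1]=Spec\,Sym_{\O_X}(\L_X[1])$ are affine over $X$ through their evaluation (resp. projection) maps: for $\T_X[-1]$ this is the definition, while for $Map(B\mathbb{G}_a,X)$ it holds because $B\mathbb{G}_a$ is connected, so such a map is a nilpotent thickening of a constant map and is determined by its basepoint together with infinitesimal data. In particular both constructions satisfy descent for Zariski (indeed \'etale) covers of $X$, and since $\L_X$ also has \'etale descent it suffices to produce a natural equivalence when $X=Spec(A)$ is a derived affine scheme.

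For the affine computation I would first identify the global functions on the source. Over a field of characteristic zero the functions on the classifying stack of the additive group are the group (equivalently Lie algebra) cohomology of $\mathbb{G}_a$ acting trivially, which is the free graded-commutative algebra on a single generator in cohomological degree $+1$; that is, $\O(B\mathbb{G}_a)\simeq k\oplus k[-1]$. Consequently, for any $R\in cdga_k$,
$$
Map(B\mathbb{G}_a,X)(R)\;\simeq\;Map_{cdga_k}\!\big(A,\;R\otimes_k\O(B\mathbb{G}_a)\big)\;\simeq\;Map_{cdga_k}\!\big(A,\;R\oplus R[-1]\big),
$$
the space of cdga maps from $A$ into the trivial square-zero extension $R\oplus R[-1]$. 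Such a map is the data of a point $x:A\to R$ together with a derivation of $A$ into the $A$-module $R[-1]$ along $x$, so by the defining property of $\L_A$ it is a point of $Map_A(\L_A,R[-1])\simeq Map_A(\L_A[1],R)$.

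This last mapping space is exactly the set of $R$-points of $\T_X[-1]=Spec\,Sym_A(\L_A[1])$ over $x$, namely an $A$-linear map $\L_A[1]\to R$, which establishes the equivalence $Map(B\mathbb{G}_a,X)\simeq \T_X[-1]$ naturally in $A$, and hence in general by the descent of the first paragraph. For the second assertion I would use $S^1=B\Z$ and the affinization map $S^1\to B\mathbb{G}_a$ coming from $\Z\to\mathbb{G}_a$; after reducing to the affine case this amounts to the fact that in characteristic zero the cochain algebra $C^*(S^1;k)$ and $\O(B\mathbb{G}_a)\simeq k\oplus k[-1]$ agree, both being the free cdga on one class in degree $+1$, so the induced map on mapping stacks is an equivalence. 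As an independent cross-check one can compute $Map(S^1,X)$ directly: writing $S^1$ as the homotopy pushout $*\sqcup_{*\sqcup *}*$ gives $Map(S^1,X)\simeq X\hfiber_{X\times X}X$, the derived loop space, whose functions are $\O_\Delta\lotimes_{\O_{X\times X}}\O_\Delta=\Omega^{-\bullet}_X=Sym_{\O_X}(\L_X[1])$ by the Hochschild--Kostant--Rosenberg isomorphism recalled above, again recovering $\T_X[-1]$.

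The main obstacle is the affinization step, where one passes from the genuine circle $S^1$ to its unipotent affinization $B\mathbb{G}_a$ and must verify that mapping a derived scheme out of it loses no information. This is precisely where characteristic zero is essential, since $\mathbb{G}_a$ is the pro-unipotent completion of $\Z$ only rationally, and it is also where the comparison of source cochain algebras must be promoted to an equivalence of mapping stacks compatibly with the descent used at the outset. By contrast, the affine identifications of the middle paragraphs are routine once the universal property of $\L_X$ and the computation of $\O(B\mathbb{G}_a)$ are in hand.
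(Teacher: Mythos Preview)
The paper does not supply its own proof of this proposition; it is quoted from \cite{BenNad} and used as background, so there is nothing in the paper to compare against. Your argument is correct and is essentially the proof given in the cited source: reduce to affines, compute $\O(B\bb G_a)\simeq k\oplus k[-1]$ in characteristic zero, read maps $A\to R\oplus R[-1]$ as a point plus a derivation via the universal property of $\L_A$, and invoke the affinization $S^1\to B\bb G_a$ together with HKR for the loop-space identification. The one step you should state more carefully is the equality $\O(Spec\,R\times B\bb G_a)\simeq R\otimes_k\O(B\bb G_a)$, which is what justifies the first displayed equivalence; this uses that $Spec$ is right adjoint to global sections (so maps into an affine target are controlled by functions on the source) together with a K\"unneth argument over the base field, and it is exactly here that the ``coaffine'' nature of $B\bb G_a$ in characteristic zero enters.
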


On the loop space of a topological space there is a natural $S^1$ action given by rotating the loop. The $S^1$-equivariant cohomology of the free loop space is isomorphic to the de Rham cohomology of $X$ \cite{Loday}. This warrants to try to interpret the de Rham cohomology of a derived stack as equivariant functions on the mapping stack.The action map $B\bb G_a\times X\to X$ is equivalent to giving a vector field of degree $-1$. The associativity of this action implies that the vector field squares to $0$. Using the identification of $\ca LX$ with $\bb T_X[-1]$ from Proposition \ref{MapTangent} the $S^1$-action on $\ca LX$ corresponds to the fiber wise translation given by the $B\bb G_a$-action on $\bb T_X[-1]$. Putting these two points together we have the following.

\begin{proposition}\cite[Proposition 1.2]{BenNad}
Let $X$ be a derived scheme then the translation $B\bb G_a$-action on $\bb T_X[-1]$ is given by the de Rham differential.
\end{proposition}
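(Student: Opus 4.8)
The plan is to reduce the statement to the universal property of the free graded-commutative algebra $\O_{\T_X[-1]}=Sym_{\O_X}^\bullet(\L_X[1])$ together with a single computation on generators. First I would invoke the fact recorded just above, that an action of the abelian derived group $B\bb G_a$ is the same datum as its infinitesimal generator: since the tangent complex of $B\bb G_a$ at the basepoint is $\C[1]$, differentiating the translation action of $B\bb G_a$ on the source of $Map(B\bb G_a,X)\simeq\T_X[-1]$ produces a canonical vector field $D$ on $\T_X[-1]$ of cohomological degree $-1$, and associativity (indeed commutativity) of the action forces $D^2=0$. Under the Ben-Zvi--Nadler identification $\O_{\T_X[-1]}=\bigoplus_p\Omega^p_X[p]$, in which a $p$-form sits in degree $-p$, the field $D$ is a $\C$-linear graded derivation of degree $-1$, which is exactly the bidegree of the de Rham differential $d_{dR}\colon\Omega^p_X\to\Omega^{p+1}_X$.

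The key observation is that such a derivation is rigid. As a $\C$-algebra, and working locally on $X$, $\O_{\T_X[-1]}$ is generated by $\O_X$ in degree $0$ together with the exact forms $df$, $f\in\O_X$, which generate $\Omega^1_X[1]$ over $\O_X$. Hence any degree $-1$ derivation $D$ with $D^2=0$ is completely determined by its restriction $D|_{\O_X}\colon\O_X\to\Omega^1_X[1]$: on the remaining generators one is forced to set $D(df)=D^2(f)=0$, and the rest follows from the Leibniz rule. Since $d_{dR}$ is precisely the square-zero degree $-1$ derivation sending $f\mapsto df$ and $df\mapsto 0$, it suffices to prove the single identity $D|_{\O_X}=d_{dR}$.

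To establish that identity I would transport the problem, via the affinization equivalence $\T_X[-1]\simeq\ca LX=Map(S^1,X)$ of Proposition \ref{MapTangent}, to the free loop space, where the $B\bb G_a$-action becomes the genuine $S^1$-rotation of loops and $D$ becomes its infinitesimal generator, the velocity vector field. On functions this is computed by the Hochschild--Kostant--Rosenberg picture already invoked in the text: $\O_{\ca LX}$ is the complex of Hochschild chains of $\O_X$, the rotation action is Connes' operator $B$, and the classical comparison of $B$ with the de Rham differential under HKR \cite{Loday} identifies the linear term of the coaction on a base function $f$ with $df$. This is exactly the statement $D|_{\O_X}=d_{dR}$, and combined with the rigidity of the previous paragraph it yields $D=d_{dR}$ globally.

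The main obstacle is this last step: making the infinitesimal comparison precise, i.e.\ pinning down the normalization under which the generator of the $S^1$-rotation on $\ca LX$ corresponds, through the two equivalences of Proposition \ref{MapTangent} and the HKR isomorphism, to Connes' $B$ and hence to $d_{dR}$ rather than to a nonzero multiple of it. This is the genuinely homotopical content, and it is where I would lean on the detailed constructions of \cite{BenNad} and the cyclic-homology computation of \cite{Loday}; the derivation and square-zero reductions above are purely formal.
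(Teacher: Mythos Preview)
The paper does not actually supply a proof of this proposition: it is quoted from \cite{BenNad} and immediately followed by further discussion, with no proof environment at all. What the paper does provide is a short motivational paragraph just before the statement, recording that a $B\bb G_a$-action is equivalent to a degree $-1$ vector field squaring to zero, and that under the equivalence of Proposition~\ref{MapTangent} the $S^1$-rotation on $\ca LX$ corresponds to the fiberwise $B\bb G_a$-translation on $\bb T_X[-1]$.

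Your proposal expands precisely these two sentences into an argument: you use the first to reduce to a square-zero degree $-1$ derivation $D$ on $Sym_{\O_X}^\bullet(\bb L_X[1])$, add the rigidity observation that such a $D$ is determined by $D|_{\O_X}$, and then use the second (the loop-space identification together with the HKR/Connes-$B$ comparison from \cite{Loday}) to pin down $D|_{\O_X}=d_{dR}$. So your strategy is entirely consonant with the paper's sketch; you have simply fleshed out what the paper leaves as a citation. You are also honest about where the real work lies---the normalization identifying the rotation generator with Connes' $B$ under HKR---and correctly attribute that step to \cite{BenNad} and \cite{Loday}. There is nothing to compare against beyond the two-line heuristic, and your outline is a faithful elaboration of it.
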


To recover the grading there is a natural action of $\bb G_m$ on $\bb G_a$ by multiplication which induces an action of $\bb G_m$ on $Map(B\bb G_a,X)$. The equivalence in Proposition \ref{MapTangent} identifies this action with dilation of the fibers of $\bb T_X[-1]$. Moreover, this action recovers the grading on $Sym^\bullet_{\O_X}(\bb L_X[1])$ and it is the unique such action when $X$ is a smooth scheme \cite[Theorem 1.3]{BenNad}.

Once one passes to derived stacks these statements are no longer true and we must take a formal completion of the loop space along the constant loops. This separation is essentially the incarnation from algebraic topology that a loop does not necessarily need to come from a single open set. After this adjustment all the statements hold for derived stacks. To sum up we make the following definition.
\begin{definition}
Let $X$ be a derived stack and $\widehat{\ca LX}$ the derived loop stack completed along the constant loops.
\begin{enumerate}
\item[(i)] A \emph{differential form} is an element of
$$
HH(X):=\O_{\widehat{\ca LX}}=Spf \lim_{n\to \infty}\left( Sym^\bullet_{\O_X}(\bb L_X[1])/Sym^{>n}_{\O_X}(\bb L_{X}[1])    \right),
$$
where the degree is given by the action of $\bb G_m$.
\item[(ii)] A differential form is \emph{closed} if it admits a lift to the negative cyclic homology
$$
HC^-(X):=\O_{\widehat{\ca LX}}^{B\bb G_a}
$$
\end{enumerate}
\end{definition}
Since the action of $\bb G_m$ and $B\bb G_a$ are compatible the degree of a closed differential form is well-defined. We will denote the degree preserving map by $i_X:HC^-(X)\to HH(X)$. In general, the notion of closedness is not well-defined as a differential form admits a space of closures, or \emph{keys}, which is the homotopy fiber of the map $i_X$ \cite{PTVV}. For our purposes it will be enough to pick a single lift.

Let us unravel what this means in terms of the tangent complex with the goal of generalizing the notion of a symplectic form to derived geometry. A $2$-form of degree $n$ is by definition a morphism 
$$
\omega^\vee:\ca O_X\to Sym^2_{\O_X}(\L_X[1])[-2+n],
$$
where the symmetric algebra is understood in the derived sense.  By the d\'ecalage isomorphism this is equivalent to a morphism 
$$
\omega^\vee:\O_X\to \wedge_{\O_X}^2\L_X[n].
$$
By the duality, $\L_{X}^\vee=\T_X$, this induces a morphism $\omega:\wedge^2_{\O_X}\T_X\to \O_X[n]$, and hence by adjunction a morphism $\Theta_\omega:\T_X\to \L_X[n]$. It is then clear how to generalize the notion of a symplectic form.

\begin{definition} \cite[Definition 1.16]{PTVV} An $n$-shifted symplectic form on a derived Artin stack $X$ is a closed $2$-form $\omega$ of degree $n$ such that the corresponding morphism
$$
\Theta_{\omega}:\bb T_X\to \bb L_X[n] 
$$
is an isomorphism in $D_{qcoh}(X)$.

\end{definition}

In the case when $X$ is a smooth scheme the above definition recovers the usual notion of a symplectic form. In particular there does not exist a degree $n$ symplectic form for $n\neq 0$ on a smooth scheme, i.e., $T_X\simeq \Omega_X[n]$ implies that $n=0$.

%Let $X$ be a holomorphic symplectic variety with symplectic form $\omega$ and suppose that $G$ is a reductive group acting by symplectomorphisms on $X$. Then the quotient $X/G$ is a singular variety such that $\omega$ descends to a symplectic form on the smooth part of $X/G$. Consider the smooth quotient stack $\ca X=[X/G]$, since we have no finiteness assumptions on the action is an Artin stack, then a natural question to ask is does $\ca X$ have a symplectic form?  This needs some justification but recent work of Pantev et al. have given a general definition of a symplectic form for any derived Artin stack.

%\begin{lemma}

%\begin{enumerate}
%\item Let $X\to Y\to Z$ be morphisms of derived stacks then there exists an exact triangle in $L_{qcoh}(X)$
%$$
%f^*\mathbb{L}_{Y/Z}\to \L_{X/Z}\to \L_{X/Y}.
%$$
%\item Let 
%$$
%\xymatrix{X\ar[r]\ar[d]&Y\ar[d]\\
%X'\ar[r]&Y'
%}
%$$
%be a commutative square then the natural morphism
%$$
%\L_{X'/Y}\lotimes_{X'} \O_X\coprod \L_{Y'/Y}\lotimes_{Y'}\O_X \to \L_{X/Y'}
%$$
%is an isomorphism.
%\end{enumerate}
%\end{lemma}
%\begin{proof}
%HAG II page 35.
%\end{proof}

\section{Derived Marsden-Weinstein Quotient}
We now return to the setting of symplectic reduction. Recall that $(X,\omega_X)$ is a smooth symplectic variety with a Hamiltonian $G$-action, where $G$ is a reductive group, with moment map $\textbf J:X\to \frak g^*$.  In the case when $\mu$ is not a regular value of the moment map the canonical morphism $j:\textbf{J}^{-1}(\mu)\hookrightarrow X$ is not flat and $\bf J^{-1}(\mu)$ is not a smooth subvariety. The motto behind derived geometry is to consider a suitable derived enhancement of $\bf{J}^{-1}(\mu)$. To this end write $\textbf{J}^{-1}(\mu)$ as a fiber product coming from the diagram
$$
\xymatrix{\textbf{J}^{-1}(\mu):=X\times_{\frak g^*}\mu\ar[r]^-{p_{\mu}}\ar[d]_{p_X}&\mu\ar[d]\\
X\ar[r]^{\textbf{J}}&\frak g^*.}
$$
Therefore the derived enhancement of $\textbf{J}^{-1}(\mu)$ is naturally the homotopy fiber product, cf. Example \ref{fiberproduct},
$$
\ca X_{\mu}:=X\stackrel{h}{\times}_{\frak g^*}\mu.
$$
The orbit space, $\ca X_{red}:=[\ca X_{\mu}/G]$, is then a smooth derived Artin stack with a $G$-equivariant morphism $\pi:\ca X_{\mu}\to \ca X_{red}$.

%Let 
%$$
%\xymatrix{X\times_{Z}Y\ar[r]^{p_X}\ar[d]_{p_Y}&X\ar[d]^f\\
%Y\ar[r]_{g}&Z}
%$$
%be a fiber diagram of schemes. Denote by $\ca S:=X\stackrel{h}{\times}_{Z}Y$ the derived fiber product \red{cite Toen}. The tangent complex of $\ca S$ is then given by 
%$$
%\bb T_\ca S=[p_X^*T_X\oplus p_Y^*T_Y\to q^*T_Z]
%$$
%where $q:X\times_ZY$ is the projection induced by the fibered square, and the arrow is given by the map $p_X^*(df)+p_Y^*(dg)$.

By Example \ref{cotangentfiber} the tangent complex of $\ca X_{\mu}$ is 
$$
\bb T_{\ca X_\mu}=[T_X|_{\ca X_\mu}\to \frak g^*\otimes \O_{\ca X_\mu}].
$$
If $G$ acts on a derived scheme $X$ then the tangent complex of the derived quotient stack $[X/G]$ is given by the $G$-equivariant complex 
$$
\bb T_{[X/G]}=[\frak g \otimes \O_X\to T_X],
$$
where the map is given by differentiating the action of $G$ on $X$.
This implies that the tangent complex of the quotient $\ca X_{red}:=[\ca X_{\mu}/G]$ is 

\begin{equation}
\label{tangentcomplex}
\bb T_{\ca X_{red}}=[\frak g\otimes \O_{\ca X_\mu}\to T_X|_{\ca X_\mu}\to \frak g^*\otimes \O_{\ca X_{\mu}} ].
\end{equation}

The following theorem is then clear from the definitions.

\begin{theorem}
There exists a non-degenerate pairing $\omega_{red}:\bb T_{\ca X_{red}}\simeq \bb L_{\ca X_{red}}$ such that $j^*\omega_{X}=\pi^*\omega_{red}$.
\end{theorem}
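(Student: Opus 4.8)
The plan is to realize $\omega_{red}$ as an explicit self-duality of the three–term complex \eqref{tangentcomplex}. Dualizing that complex as in Example \ref{cotangentfiber}, and writing $a:\frak g\otimes\O_{\ca X_\mu}\to T_X|_{\ca X_\mu}$ for the infinitesimal action, the cotangent complex is
$$
\bb L_{\ca X_{red}}=[\,\frak g\otimes\O_{\ca X_\mu}\xrightarrow{(d\textbf{J})^\vee}\Omega_X|_{\ca X_\mu}\xrightarrow{a^\vee}\frak g^*\otimes\O_{\ca X_\mu}\,],
$$
concentrated in the same degrees $[-1,0,1]$ as $\bb T_{\ca X_{red}}$. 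First I would write down the candidate comparison $\omega_{red}:\bb T_{\ca X_{red}}\to\bb L_{\ca X_{red}}$ termwise: the identity on the outer summands $\frak g\otimes\O_{\ca X_\mu}$ and $\frak g^*\otimes\O_{\ca X_\mu}$, and the restriction $\Theta_{\omega_X}|_{\ca X_\mu}:T_X|_{\ca X_\mu}\to\Omega_X|_{\ca X_\mu}$ of the musical isomorphism in the middle.

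The one step that carries all the content is checking that this triple is a morphism of complexes. The left square commutes precisely when $\Theta_{\omega_X}\circ a=(d\textbf{J})^\vee$; unwinding both sides on a generator $\xi\in\frak g$ this reads $\iota_{a(\xi)}\omega_X=d\la\textbf{J},\xi\ra$, which is exactly the defining equation of the moment map. The right square then follows from the same identity together with the skew–symmetry of $\omega_X$, the lone sign being absorbed into the convention on the comparison map (e.g. by taking $-\mathrm{id}$ in degree $1$). Every map here is $G$–equivariant, since the action is by symplectomorphisms and $\textbf{J}$ is equivariant, so the comparison is defined over the quotient stack $\ca X_{red}$.

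Granting the chain–map property, non-degeneracy is then immediate and requires no further hypotheses: the outer maps are identities and the middle map $\Theta_{\omega_X}$ is an isomorphism because $(X,\omega_X)$ is a smooth $0$–symplectic variety, so $\omega_{red}$ is a termwise isomorphism and hence an equivalence $\bb T_{\ca X_{red}}\simeq\bb L_{\ca X_{red}}$ in $D_{qcoh}(\ca X_{red})$. For the compatibility I would pull both forms back to $\ca X_\mu$, where $\bb T_{\ca X_\mu}=[T_X|_{\ca X_\mu}\to\frak g^*\otimes\O_{\ca X_\mu}]$: the map induced by $j$ is the projection onto the degree–$0$ term followed by $\omega_X$, while $\pi^*\omega_{red}$ evaluates the same degree–$0$ vectors through the middle component $\Theta_{\omega_X}$; since that component is literally $\omega_X$, the two agree, giving $j^*\omega_X=\pi^*\omega_{red}$.

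I expect the genuine obstacle to be bookkeeping in the stacky/derived setting rather than anything conceptually deep: making the equivariant descent of the pairing along the quotient map $\pi$ precise, and—if one wants the full symplectic statement of Theorem \ref{derivedMW} rather than only the non-degenerate pairing—promoting $\omega_{red}$ to a class in $HC^-(\ca X_{red})$ to certify closedness, which should come from descending the closed form $j^*\omega_X$ but must be tracked through the space of keys. It is worth recording that taking cohomology recovers the classical picture (cf. Remark \ref{Compare}): $H^0(\bb T_{\ca X_{red}})=\ker d\textbf{J}/\frak g\!\cdot\!x$ carries the usual Marsden–Weinstein form, while the self-duality additionally pairs $H^{-1}=\frak g_x$ with $H^{1}=\coker d\textbf{J}$—exactly the stabilizer and obstruction data suppressed by the classical regular-value hypothesis.
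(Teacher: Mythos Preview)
Your proposal is correct and follows essentially the same approach as the paper: define $\omega_{red}$ termwise on the three–term complex \eqref{tangentcomplex} by the identity on the outer summands and $\omega_X|_{\ca X_\mu}$ in the middle, observe it is a termwise isomorphism hence a quasi-isomorphism, and read off $j^*\omega_X=\pi^*\omega_{red}$ by construction. If anything you are more careful than the paper, which asserts commutativity of the diagram without spelling out that the left square is exactly the moment map equation $\iota_{a(\xi)}\omega_X=d\la\textbf{J},\xi\ra$; the paper also defers the closedness/$HC^-$ issue to its two subsequent lemmas, just as you anticipate.
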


\begin{proof}
By definition $\bb L_{\ca X_{red}}:=\bb T_{\ca X_{red}}^\vee$. The form $\omega_{red}$ is defined by the quasi-isomorphism of complexes
$$
\xymatrix{\frak g\otimes \O_{\ca X_{\mu}}\ar[r]\ar[d]&T_{X}|_{\ca X_{\mu}}\ar[r]\ar[d]^{\omega_{X}|_{\ca X_{\mu}}}&\frak g^*\otimes\O_{\ca X_{\mu}}\ar[d]\\
\frak g \otimes \O_{\ca X_{\mu}}\ar[r]&T^*_X|_{\ca X_\mu}\ar[r]&\frak g^*\otimes \O_{\ca X_{\mu}}.
}
$$
It is clear these complexes are quasi-isomorphic since $\omega_X$ is an isomorphism and $G$-equivariant. By construction $j^*\omega_X=\pi^*\omega_{red}$.
\end{proof}

What remains is to see that the form constructed is  closed in the derived sense i.e., $\omega_{red}\in HH_2(\ca X_{red})$ has a lift to $HC^-_2(\ca X_{red})$. We break the proof into two lemmas.

\begin{lemma}
In the notation from above the form $\pi^*\omega_{red}\in HH_2({\ca X_{\mu}})$ is $B\bb G_a$-equivariant.
\end{lemma}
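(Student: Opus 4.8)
The key input is the identity $\pi^*\omega_{red}=j^*\omega_X$ established in the preceding theorem, which reduces the closedness of $\pi^*\omega_{red}$ to the closedness of $\omega_X$ on the smooth variety $X$. Since $\omega_X$ is a symplectic form in the classical sense it is in particular de Rham closed, and under the Ben-Zvi--Nadler dictionary this says precisely that $\omega_X\in HH_2(X)$ lies in the image of $i_X:HC^-_2(X)\to HH_2(X)$; fix a $B\bb G_a$-equivariant lift $\widetilde\omega_X\in HC^-_2(X)$. The plan is to pull $\widetilde\omega_X$ back along $j$ and to show that the pullback on Hochschild homology preserves $B\bb G_a$-equivariance by naturality of the loop-space construction. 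This is the exact derived analogue of the classical fact that $j^*\omega_X$ is closed because $d(j^*\omega_X)=j^*(d\omega_X)=0$, with the role of $d$ now played by the $B\bb G_a$-action.

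To carry this out I would use the functoriality of $Map(S^1,-)$. The morphism $j:\ca X_\mu\to X$ induces a morphism of loop stacks $\ca Lj:\ca L\ca X_\mu\to\ca L X$, and since $j$ sends constant loops to constant loops this passes to the completions $\widehat{\ca L\ca X_\mu}\to\widehat{\ca L X}$. The rotation $S^1$-actions on source and target are both induced by the action of $S^1$ on itself, so $\ca Lj$ is $S^1$-equivariant by naturality. Passing to functions and invoking the affinization $S^1\to B\bb G_a$ of Proposition \ref{MapTangent}, together with the identification of the $B\bb G_a$-action with the de Rham differential, the pullback $j^*:HH(X)\to HH(\ca X_\mu)$ therefore intertwines the two $B\bb G_a$-actions. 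Consequently $j^*$ refines to a map $HC^-(X)\to HC^-(\ca X_\mu)$ fitting into a commutative square with $i_X$ and $i_{\ca X_\mu}$.

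Given this square the conclusion is immediate: $j^*\widetilde\omega_X\in HC^-_2(\ca X_\mu)$ is a lift of $j^*\omega_X=\pi^*\omega_{red}$ along $i_{\ca X_\mu}$, so $\pi^*\omega_{red}$ is $B\bb G_a$-equivariant, as claimed.

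The step I expect to be the main obstacle is the equivariance of the pullback at the level of the \emph{completed} loop stacks rather than the naive ones. The completion along constant loops is exactly what repairs the clean scheme-level statements of Ben-Zvi--Nadler, and one must verify that forming the pro-system $Sym^\bullet_{\O}(\bb L[1])/Sym^{>n}_{\O}(\bb L[1])$ from the definition is compatible both with $j^*$ and with the rotation action. This is where the fact that $j$ preserves constant loops is essential; once that compatibility is granted, everything else is formal naturality of $Map(S^1,-)$.
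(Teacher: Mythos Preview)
Your argument is essentially identical to the paper's: reduce to $j^*\omega_X$ via $\pi^*\omega_{red}=j^*\omega_X$, choose a lift $\widetilde{\omega_X}\in HC^-_2(X)$ of the closed form $\omega_X$, and use the naturality square between $HC^-$ and $HH$ along $j$ to conclude that $j^{-*}\widetilde{\omega_X}$ lifts $j^*\omega_X$. The paper simply asserts the commutative square \eqref{CycHoch} without comment, whereas you supply the reason (functoriality and $S^1$-equivariance of the loop construction, compatible with completion); your flagged ``main obstacle'' is therefore exactly the point the paper takes for granted.
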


\begin{proof}
A form is $B\bb G_a$-equivariant if and only if it admits a lift from $HH_*(X)$ to $HC_*^-(X)$ under the canonical map $i_X:HC^-_*(X)\to HH_*(X)$. By construction we see that $\pi^*\omega_{red}=j^*\omega_X$, therefore it is enough to show that $j^*\omega_X$ is $B\bb G_a$-equivariant. 

Let $\widetilde{\omega_X}$ be a lift of $\omega_X\in HH_2(X)$ to $HC_2^-(X)$, which exists since $\omega_X$ is closed.  If $j^{-*}$ denotes the induced map on negative cyclic homology then we claim that $j^{-*}(\widetilde{\omega_X})$ provides a lift of $j^*\omega_X$. Indeed, the commutative diagram
\begin{equation}
\label{CycHoch}
\xymatrix{HC^-_2(X)\ar[r]^{i_X}\ar[d]_{j^-*}&HH_2(X)\ar[d]^{j^*}\\
HC_2^-(\ca X_\mu)\ar[r]_{i_{\ca X_\mu}}&HH_2(\ca X_\mu)}
\end{equation}
implies that
$$
i_{\ca X_{\mu}}(j^{-*}(\widetilde{\omega_X}))=j^*i_X(\widetilde{\omega_X})=j^*\omega_X.
$$
\end{proof}

To finish the closedness part of the argument we need to show the following.
\begin{lemma}
If $\pi^*\omega_{red}$ is $B\bb G_a$-equivariant then $\omega_{red}$ is $B \bb G_a$-equivariant.
\end{lemma}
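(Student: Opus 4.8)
The plan is to use the fact that $\pi:\ca X_\mu\to\ca X_{red}=[\ca X_\mu/G]$ is the quotient by the reductive group $G$, together with the observation that $HH(\ca X_\mu)$ and $HC^-(\ca X_\mu)$ are built functorially from the ($G$-equivariant) cotangent complex. First I would record that the pullback $\pi^*$ and its negative cyclic analogue $\pi^{-*}$ fit into a commutative square with the maps $i_{\ca X_{red}}$ and $i_{\ca X_\mu}$:
$$
\xymatrix{HC^-_2(\ca X_{red})\ar[r]^{i_{\ca X_{red}}}\ar[d]_{\pi^{-*}}&HH_2(\ca X_{red})\ar[d]^{\pi^*}\\
HC^-_2(\ca X_\mu)\ar[r]_{i_{\ca X_\mu}}&HH_2(\ca X_\mu).}
$$
The key structural input, which is where reductivity of $G$ enters decisively, is that both vertical arrows are injective and identify their sources with the $G$-invariant subspaces $HH_2(\ca X_\mu)^G$ and $HC^-_2(\ca X_\mu)^G$. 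In characteristic zero the invariants functor for a reductive group is exact and splits off as a direct summand via the Reynolds operator, so this identification is compatible with the de Rham and cyclic structures.

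Granting this, the lemma becomes a short diagram chase. By hypothesis $\pi^*\omega_{red}=j^*\omega_X$ admits a lift $\tau\in HC^-_2(\ca X_\mu)$, and by the construction in the previous lemma we may take $\tau=j^{-*}(\widetilde{\omega_X})$ where $\widetilde{\omega_X}$ is chosen $G$-invariant, which is possible since $\omega_X$ is $G$-invariant and $G$ is reductive; as $j$ is $G$-equivariant, $\tau$ is then $G$-invariant. Since the image of $\pi^{-*}$ is exactly $HC^-_2(\ca X_\mu)^G$, there is $\sigma\in HC^-_2(\ca X_{red})$ with $\pi^{-*}(\sigma)=\tau$. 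Then by commutativity of the square,
$$
\pi^*\big(i_{\ca X_{red}}(\sigma)\big)=i_{\ca X_\mu}\big(\pi^{-*}(\sigma)\big)=i_{\ca X_\mu}(\tau)=\pi^*\omega_{red},
$$
and because $\pi^*$ is injective on $HH_2$ we conclude $i_{\ca X_{red}}(\sigma)=\omega_{red}$. Thus $\omega_{red}$ lifts to $HC^-_2(\ca X_{red})$, i.e. it is $B\bb G_a$-equivariant. If one prefers not to prearrange $\widetilde{\omega_X}$ to be invariant, one may instead average an arbitrary lift $\tau$ over $G$: since $i_{\ca X_\mu}$ is $G$-equivariant and $i_{\ca X_\mu}(\tau)=\pi^*\omega_{red}$ is already $G$-invariant, the averaged lift $\bar\tau$ still satisfies $i_{\ca X_\mu}(\bar\tau)=\pi^*\omega_{red}$ and is $G$-invariant, so the same argument applies.

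The main obstacle is justifying the structural input above, namely that forming the quotient stack by the reductive group $G$ corresponds, at the level of the complexes computing $HH$ and $HC^-$, to taking $G$-invariants, and that this identification is compatible with the $B\bb G_a$-action (equivalently, with the maps $i$). This requires knowing that the completed loop stack and its $B\bb G_a$-equivariant functions descend appropriately along $\pi$; the decisive point is that the $G$-action and the $B\bb G_a$-action on $\widehat{\ca L\ca X_\mu}$ commute, so that the Reynolds averaging operator is a morphism of $B\bb G_a$-modules and hence carries $i_{\ca X_\mu}$ to $i_{\ca X_{red}}$ on invariants. Once this compatibility is in place the remaining steps are formal.
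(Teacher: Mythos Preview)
Your argument is correct and follows essentially the same route as the paper: the paper simply writes down the commutative square relating $HC^-$ and $HH$ of $\ca X_{red}$ with the $G$-invariants on $\ca X_\mu$, notes that $\pi^*\omega_{red}$ is $G$-equivariant, and says to repeat the diagram chase of the previous lemma. Your version is considerably more explicit about why reductivity (via the Reynolds operator) gives the identification with $G$-invariants and why the $G$- and $B\bb G_a$-actions commute, points the paper leaves implicit.
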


\begin{proof}
There is a commutative diagram
$$
\xymatrix{
HC_2(\ca X_\mu)^G\ar[r]\ar[d]&HH_2(\ca X_\mu)^G\ar[d]\\
HC_2(\ca X_{red})\ar[r]&HH_2(\ca X_{red})}
$$
The form $\pi^*\omega_{red}$ is $G$-equivariant and hence descends to the form $\omega_{red}\in HH_2(\ca X_{red})$. Now just repeat the argument from the previous lemma.
\end{proof}

It follows from a standard argument that symplectic reduction for points in $\frak g^*$ implies that symplectic reduction holds for any coadjoint orbit in $\frak g^*$ \cite{McDSal}. Indeed, let $\bb O$ be a coadjoint orbit and consider the symplectic variety $(X\times \bb O,\omega_{X}-\omega_{\bb O})$, where $\omega_{\bb O}$ is the standard symplectic form on the coadjoint orbit \cite{ChrGin}. It follows that $(X\times \bb O,\omega_{X}-\omega_{\bb O})$ has a Hamiltonian $G$-action with moment map
$$
\textbf {J}'(x,\eta)=\textbf {J}(x)-\eta,
$$
and it follows that 
$$
\textbf{J}^{-1}(\bb O)=\textbf{J}'^{-1}(0).
$$
The above argument implies the result for any coadjoint orbit. This completes the proof of Theorem \ref{derivedMW}.

\begin{remark}
\label{Compare}
The proof is very similar to the original proof of J. Marsden and A. Weinstein when $0$ is a regular value and the action of $G$ is free and proper. Their proof was essentially to first show that $T^*(\textbf{J}^{-1}(0)/G)=T^*(\textbf{J}^{-1}(0))/G$, cf. \eqref{tangentcomplex}, and then construct the symplectic form $\omega_{red}$. The closedness argument was not immediate but followed from injectivity of $\pi^*$ and 
$$
\pi^*d\omega_{red}=d\pi^*\omega_{red}=dj^*\omega_{X}=j^*d\omega_{X}=0.
$$
Lemma 3.2 and 3.3 provide the derived analogue of this argument.
\end{remark}

\begin{remark}
The same argument holds when the symplectic variety is replaced with a $0$-symplectic derived scheme since the cotangent complex is concentrated in degrees $[-N,0]$ for some positive integer $N$. However, the same argument does not hold for a $0$-symplectic derived Artin stack and it is not immediately clear how to generalize the argument to this case. 
\end{remark}

%In general the space $X\times_{\frak g^*}\mu/G$ is very singular and does not have a symplectic form. However, we will show that there exists a Poisson bivector providing an algebraic geometric analogue of a theorem of J. Arms, R. Cushman, and M. Gotay \cite{ACG}. The $0$-symplectic form provides a quasi-isomorphism $\omega_{red}:\T_{\ca X_{red}}\simeq \bb L_{\ca X_{red}}$ in $D_{qcoh}(\ca X_{red})$, and hence can be inverted in $D_{qcoh}(\ca X_{red})$ to give $\bb P:=\omega_{red}^{-1}:\L_{\ca X_{red}}\simeq\T_{\ca X_{red}}$. This is by definition a $0$-Poisson bivector \cite{PTVV}. The restriction of $h^0(\bb P)$ to the truncation $h^0(\ca X_{red})=X\times_{\frak g^*}\mu/G$ is a degenerate bivector which satisfies the Jacobi identity. Hence we have the following corollary.
%\begin{corollary}
%There is a Poisson bivector on the singular variety $X\times_{\frak g^*}\mu/G$.
%\end{corollary}

\bibliographystyle{plain}
\bibliography{SymplecticArtin}

\end{document}